\theoremstyle{definition}
\newtheorem{theorem}{Theorem}[section]
\newtheorem{proposition}[theorem]{Proposition}
\theoremstyle{definition}
\theoremstyle{definition}
\newtheorem{example}[theorem]{Example}
\def\Z{\mathbb{Z}}
\def\S{\mathbb{S}}
\begin{document}
\baselineskip=17pt
\title[]{Cohomology classification of spaces with free $\S^3$-Actions }
\author[Anju Kumari and Hemant Kumar Singh]{ Anju Kumari and Hemant Kumar Singh}
\address{ Anju Kumari\newline 
	\indent Department of Mathematics\indent \newline\indent University of Delhi\newline\indent 
	Delhi -- 110007, India.}
\email{anjukumari0702@gmail.com}
\address{  Hemant Kumar Singh\newline\indent 
Department of Mathematics\newline\indent University of Delhi\newline\indent 
Delhi -- 110007, India.}
\email{hemantksingh@maths.du.ac.in}

\date{}
\thanks{This paper is supported by the Science and Engineering Research Board (Department of Science and Technology, Government of India) with reference number- EMR/2017/002192}
\begin{abstract} 
	
This paper gives the cohomology classification of finitistic spaces $X$ equipped with free actions of the group $G=\mathbb{S}^3$ and the orbit space $X/G$ is the integral or  mod 2 cohomology quaternion projective space $\mathbb{HP}^n$. We have proved that $X$ is the integral or  mod 2 cohomology  $\mathbb{S}^{4n+3}$ or $\mathbb{S}^3\times \mathbb{HP}^n$. Similar results for $G=\mathbb{S}^1$ actions are also discussed.
\end{abstract}
\subjclass[2020]{Primary 55T10; Secondary 57S99 }

\keywords{Free action; Finitistic space; Leray-Serre spectral sequence; Smith-Gysin sequence; Euler class}

\maketitle
\section {Introduction}

Let $G$ be a compact Lie group acting on a finitistic space $X$. There are interesting problems related to transformation groups, for example, to classify the fixed point set $X^G$, the existence of free/semifree actions and the study of the orbit space $X/G$ for free actions of $G$ on $X$.  A  number of results has been proved in the literature in this direction  \cite{Baszczyk,Pinka Dey,Dotzel,Harvey,Pergher,Mahender}. An another thread of research is to classify $X$ for a given orbit space $X/G$ when $G$ acts freely on $X$. 
 Su\cite{Su1963} proved that if $G=\mathbb{S}^d$, $d=0,1$, acts freely on a space $X$ and the orbit space $X/G$   is cohomology $\mathbb{FP}^n$, then  space $X$ is the cohomology sphere $\mathbb{S}^{(d+1)n+d}$, when $d=0$, $\mathbb{F}=\mathbb{R}$ with $\mathbb{Z}_2$ coefficients, and when $d=1$, $\mathbb{F}=\mathbb{C}$ with integer coefficients.  He also proved that if $G=\mathbb{Z}_p$, $p$ an odd prime, acting freely on a space $X$ with the orbit space the mod $p$ cohomology Lens space $\text{L}_p^{2n+1}$,  then $X$ is the mod $p$ cohomology $(2n+1)$-sphere $\mathbb{S}^{2n+1}$.
  Kaur et al. \cite{JKaur2015} shown that if $G=\mathbb{S}^3$ acts freely on the mod 2 cohomology $n$-sphere $\mathbb{S}^n$, then $n\equiv3\text{(mod 4)}$ and the orbit space is the mod 2 cohomology quaternion  projective space $\mathbb{HP}^n$. In this paper, we have  shown that if $G=\mathbb{S}^3$  acts freely on a finitistic space $X$ with the orbit space the mod 2 cohomology quaternion projective space, then $X$ is the mod 2 cohomology $\mathbb{S}^{4n+3}$ or $\mathbb{S}^3\times \mathbb{HP}^n$ depending upon the Euler class of the associated bundle is nontrivial or trivial. A similar result with the integer coefficient  is also discussed. 
We have also proved  Kaur's results \cite{JKaur2015} with integer coefficients.

For the actions of  $G=\mathbb{S}^1$,  Su\cite{Su1963} proved that if $G=\mathbb{S}^1$ acts freely on a space $X$ such that $X/G$ is a cohomology complex projective space with $\text{dim}_{\mathbb{Z}}X/G<\infty$ and $\pi^*:H^2(X/G)\rightarrow H^2(X)$, where $\pi:X\rightarrow X/G$ is the orbit map, is trivial, then $X$ is an integral cohomology $(2n+1)$-sphere.
 We  have discussed the case when the induced map $\pi^*$ is nontrivial. In this case, we have proved that $X$ is the integral cohomology $\mathbb{S}^1\times\mathbb{CP}^n$. With coefficients in $\mathbb{Z}_p$, $p$ a prime, we have also shown that $X$ is the mod $p$ cohomology $\mathbb{S}^{2n+1}$ or $\mathbb{S}^1\times\mathbb{CP}^n$ or $L_p^{2n+1}$.

\section{Preliminaries}
Let $G$ be a compact Lie group and $G\to E_G\to B_G$ be the universal principal $G$-bundle, where $B_G$ is the classifying space. Suppose $G$  acts freely on a  space $X$. The associated bundle $X\hookrightarrow (X\times E_G)/{G}\to B_G$ is a fibre bundle with fibre $X$. Put $X_G=(X\times E_G)/{G}$. Then the bundle $X\hookrightarrow X_G\to B_G$ is called the Borel fibration. We consider the Leray-Serre spectral sequence for the Borel fibration. If $B_G$ is simply connected, then the system of local coefficients on $B_G$ is simple and the $E_2$-term of the Leray-Serre spectral sequence corresponding to the Borel fibration becomes
\begin{equation*}
E_2^{k,l}= H^k(B_G;R)\otimes H^l(X;R).
\end{equation*}
For details about spectral sequences, we refer \cite{McCleary}.
Let $h:X_G\rightarrow X/G$ be the map induced by the $G$-equivariant projection $X\times E_G\rightarrow X$.  Then, h is a homotopy equivalence \cite{Dieck}.

The following results  are needed to prove our results:

\begin{proposition}[\cite{Hatcher}]
	Let $R$ denote a ring and $\S^{n-1}\to E\stackrel{\pi}{\rightarrow} B$ be an oriented sphere bundle. The following sequence is exact  with  coefficients in $R$
	\begin{align*}
	\cdots\rightarrow H^{i}(E)\stackrel{\rho}{\rightarrow}H^{i-n+1}(B)\stackrel{\cup}{\rightarrow}H^{i+1}(B) \stackrel{\pi^*}{\rightarrow}H^{i+1}(E)\stackrel{\rho}{\rightarrow}H^{i-n+2}(B)\rightarrow\cdots
	\end{align*}
	which  start with 
	\begin{align*}
	0\rightarrow &H^{n-1}(B)\stackrel{\pi^*}{\rightarrow}H^{n-1}(E)\stackrel{\rho}{\rightarrow}H^0(B)\stackrel{\cup}{\rightarrow}H^n(B)\stackrel{\pi^*}{\rightarrow}H^n(E)\rightarrow\cdots 	\end{align*}
	where $\cup:H^i(B)\to H^{i+n}(B)$  maps $x\to x\cup u$ and $u\in H^n(B)$ denotes the Euler class of the  sphere  bundle. The above exact sequence is called the Gysin sequence. It is easy to observe that $\pi^*:H^i(E)\to H^i(B)$ is an isomorphism for all $0\leq i< n-1$.
\end{proposition}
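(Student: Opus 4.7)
The plan is to derive the Gysin sequence from the Leray--Serre spectral sequence of the fibration $\S^{n-1}\to E\stackrel{\pi}{\to}B$. Since the bundle is oriented, the local coefficient system $H^\ast(\S^{n-1};R)$ on $B$ is simple, so the $E_2$-page reads
\[
E_2^{p,q}=H^p(B;R)\otimes H^q(\S^{n-1};R),
\]
which is concentrated in the two rows $q=0$ and $q=n-1$, each isomorphic to $H^\ast(B;R)$.

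Because these rows are separated by $n-1$ intermediate rows of zeros, the only possibly nontrivial differential is
\[
d_n\colon E_n^{p,n-1}\longrightarrow E_n^{p+n,0},
\]
and the spectral sequence collapses at $E_{n+1}$. Choose a generator $e\in H^{n-1}(\S^{n-1};R)$ and define the Euler class by $u:=d_n(1\otimes e)\in H^n(B;R)$. Since the elements of the bottom row are permanent cycles, the Leibniz rule of the multiplicative spectral sequence yields
\[
d_n(x\otimes e)=x\cup d_n(1\otimes e)=x\cup u
\]
for every $x\in H^p(B;R)$, so $d_n$ is precisely cup product with the Euler class.

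Convergence of the spectral sequence to $H^\ast(E;R)$, together with the vanishing of all rows except $q=0$ and $q=n-1$, produces for each total degree $i$ a short exact sequence
\[
0\longrightarrow E_\infty^{i,0}\stackrel{\pi^\ast}{\longrightarrow}H^i(E;R)\stackrel{\rho}{\longrightarrow}E_\infty^{i-n+1,\,n-1}\longrightarrow 0,
\]
where $E_\infty^{i,0}=H^i(B;R)/\operatorname{im}\bigl(\cup u\colon H^{i-n}(B)\to H^i(B)\bigr)$ is reached through the edge homomorphism, and $E_\infty^{i-n+1,n-1}=\ker\bigl(\cup u\colon H^{i-n+1}(B)\to H^{i+1}(B)\bigr)$. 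The map $\rho$ factors as the quotient $H^i(E;R)\twoheadrightarrow E_\infty^{i-n+1,n-1}$ followed by the inclusion into $H^{i-n+1}(B;R)$. Splicing these short exact sequences via the cup-product maps $\cup u$ yields the long exact Gysin sequence. The ``starting'' portion is then immediate: for $i\le n-2$ we have $H^{i-n+1}(B)=0$, forcing $\pi^\ast\colon H^i(B)\to H^i(E)$ to be an isomorphism, and at $i=n-1$ both rows contribute for the first time, producing the stated five-term exact beginning.

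The main technical step I would have to verify carefully is the identification $d_n(1\otimes e)=u$ with the Euler class (matching the definition coming from the Thom--Gysin construction) together with the derivation $d_n(x\otimes e)=x\cup u$ from multiplicativity; once these are in hand, everything else is standard bookkeeping from a two-row spectral sequence.
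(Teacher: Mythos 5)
Your derivation is correct. The paper offers no proof of this proposition at all --- it is quoted from Hatcher, where the Gysin sequence is obtained by a different route: one forms the mapping cylinder of $\pi$ (the ``disk bundle''), invokes the Thom isomorphism $H^{i}(B)\cong H^{i+n}(D(E),S(E))$, and substitutes into the long exact sequence of the pair, the Euler class appearing as the restriction of the Thom class. Your argument instead reads the sequence off the two-row Leray--Serre spectral sequence, with $u$ defined as the transgression $d_n(1\otimes e)$ and $d_n=\cup\,u$ by multiplicativity. This buys something real: it needs no Thom space or vector-bundle structure (any fibration whose fibre is an $R$-cohomology $(n-1)$-sphere with simple coefficients will do, which is exactly the generality the authors exploit for $G=\S^3$ acting freely on $X$), and it meshes with the rest of the paper, which runs on the same spectral sequence. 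What Hatcher's route buys is a definition-independent Euler class and no spectral-sequence machinery. The points you flag as needing care are the right ones and are all standard: the Leibniz rule gives $d_n(x\otimes e)=(-1)^{|x|}x\cup u$ (the sign is harmless for exactness), the transgression agrees with the Thom-theoretic Euler class for oriented bundles, and $H^p(B;H^q(\S^{n-1};R))\cong H^p(B;R)\otimes H^q(\S^{n-1};R)$ because the coefficient module is free of rank one. One trivial caveat: your ``two separated rows'' picture requires $n\geq 2$; for $n=1$ the rows coincide and the statement must be read separately. I see no gap.
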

 \begin{proposition}\cite{JKaur2015}\label{H^j(X/G)=0}
	Let $A$ be an $R$-module, where $R$ is PID, and $G=\mathbb{S}^3$ acts freely on a finitistic space $X$. Suppose that $H^j(X,A)=0$ for all $j>n$, then $H^j(X/G,A)=0$ for all $j>n$.
\end{proposition}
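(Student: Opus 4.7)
The plan is to apply the Gysin sequence of Proposition~2.1 to the principal $G$-bundle $\pi\colon X\to X/G$, regarded as an oriented $\mathbb{S}^3$-bundle (so the ``$n$'' of that proposition is $4$). Specializing to coefficients in $A$, the sequence reads
\begin{equation*}
\cdots\to H^i(X;A)\xrightarrow{\rho}H^{i-3}(X/G;A)\xrightarrow{\cup u}H^{i+1}(X/G;A)\xrightarrow{\pi^*}H^{i+1}(X;A)\to\cdots,
\end{equation*}
with $u\in H^4(X/G;R)$ the Euler class of the bundle.

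For every $j\geq n+2$ the hypothesis $H^{j-1}(X;A)=H^j(X;A)=0$ forces, via exactness, that $\cup u\colon H^{j-4}(X/G;A)\xrightarrow{\cong}H^j(X/G;A)$ is an isomorphism. Iterating, one obtains a $4$-periodicity $H^m(X/G;A)\cong H^{m+4k}(X/G;A)$ for all $m\geq n-2$ and $k\geq 0$. To upgrade this periodicity to actual vanishing, I would invoke finite cohomological dimension of the orbit space: since $X$ is finitistic and $G=\mathbb{S}^3$ is a compact Lie group acting freely, $X/G$ is itself finitistic, and combined with the bound $\dim_A X\leq n$, standard results on free actions of compact Lie groups on finitistic spaces yield an integer $N$ with $H^j(X/G;A)=0$ for all $j>N$. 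For any $m\geq n-2$, picking $k$ so large that $m+4k>N$ and applying the periodicity gives $H^m(X/G;A)\cong H^{m+4k}(X/G;A)=0$; in particular $H^j(X/G;A)=0$ for all $j>n$, as required.

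The main obstacle in this approach is the finite cohomological dimension bound for $X/G$: the Gysin periodicity of the previous paragraph is empty content without an \emph{a priori} upper bound on the cohomology of $X/G$. It is precisely here that the finitistic hypothesis on $X$ is indispensable, and anchoring the argument rests on transformation-group-theoretic input asserting that free compact Lie group actions do not increase cohomological dimension.
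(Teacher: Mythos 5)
The first half of your argument (the Gysin periodicity $\cup u\colon H^{j-4}(X/G;A)\xrightarrow{\;\cong\;}H^{j}(X/G;A)$ for $j\geq n+2$) is correct and is indeed the easy half of the standard proof. The gap is in your anchoring step. You claim that because $X$ is finitistic and the action is free, "standard results" give an integer $N$ with $H^j(X/G;A)=0$ for all $j>N$. No such result is available from your hypotheses: finitistic does \emph{not} mean finite covering or cohomological dimension (the Hilbert cube $Q$, or $\prod_{i\geq 1}\mathbb{S}^2$, is compact, hence finitistic, and infinite-dimensional — the latter even has nonzero cohomology in infinitely many degrees), and the hypothesis $H^j(X;A)=0$ for $j>n$ is a statement about the Čech cohomology of $X$ only; it does not give $\dim_A X\leq n$ in any sense that passes to a dimension bound on $X/G$. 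Concretely, let $X=\mathbb{S}^{4k+3}\times Q$ with $\mathbb{S}^3$ acting freely on the first factor: $X$ is finitistic with $H^j(X;A)=0$ for $j>4k+3$, yet $X/G=\mathbb{HP}^k\times Q$ is infinite-dimensional, so no dimension-theoretic principle produces your $N$. The vanishing of $H^j(X/G;A)$ above some degree is exactly the nontrivial content of the proposition, so as written your argument is circular at the decisive point.

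The correct way to close the loop — and this is where the finitistic hypothesis actually enters in \cite{JKaur2015} and in the classical treatments (Bredon, tom Dieck, Su) — is to show that the Euler class $u\in H^4(X/G;R)$ is \emph{nilpotent}. Since the action is free, the principal bundle $X\to X/G$ is locally trivial, so $u$ restricts to zero on the members of a trivializing open cover; because $X/G$ is finitistic (Deo--Tripathi), that cover has a finite-dimensional refinement, and a Lusternik--Schnirelmann--type argument (a cover of dimension $N$ by sets on which $u$ dies forces $u^{N+1}=0$) gives nilpotency. Equivalently, one can quote the localization theorem for finitistic $G$-spaces: $X^G=\emptyset$ forces every element of $H^*_G(X)\cong H^*(X/G)$ to be annihilated by a power of $u$. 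Either statement, combined with your periodicity, finishes the proof: if $0\neq\alpha\in H^m(X/G;A)$ with $m>n$, injectivity of $\cup u$ in those degrees gives $\alpha\cup u^{k}\neq 0$ for all $k$, contradicting $u^{N+1}=0$. So replace the appeal to finite cohomological dimension by a proof (or citation) of nilpotency of $u$; without that, the argument does not go through.
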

We have taken \v{C}ech cohomology  and all spaces are assumed to be finitistic. Note that $X\sim_R Y$ means $H^*(X;R)\cong H^*(Y;R)$, where $R=\mathbb{Z}_2$ or $\mathbb{Z}$.

\section{Main Theorems}

Recall that the projective spaces $\mathbb{FP}^n$ are the orbit spaces  of standard free actions of $G=\mathbb{S}^d$ on $\mathbb{S}^{(d+1)n+d}$, where $\mathbb{F}=\mathbb{C}$ or $\mathbb{H}$ for $d=1$ or $3$, respectively. If we take a free action of $\mathbb{S}^d$  on itself and the trivial action on $\mathbb{FP}^n$, then the orbit space of this diagonal action is $\mathbb{FP}^n.$ Now, the natural question: Is the converse true? If $G$ acts freely on a finitistic space $X$ with $X/G\sim_R \mathbb{FP}^n$, then 
whether $X\sim_R\mathbb{S}^{(d+1)n+d}$
 or  $X\sim_R \mathbb{S}^d\times\mathbb{FP}^n$.
  In the following theorems, we have proved that the converse of these statements are  true.

\begin{theorem}\label{Theorem 1}
	Let $G=\S^3$ acts freely on a finitistic space $X$ with $X/G\sim_R \mathbb{HP}^n$, where $R=\mathbb{Z}_2$ or $\mathbb{Z}$, and   $u\in H^4(X/G)$ be the Euler class of  the bundle $G\hookrightarrow X\stackrel{\pi}{\to} X/G$. Then, $u$ is either trivial or  generator of $H^*(X/G)$. Moreover, 
	\begin{enumerate}
		\item[(i)] If $u$ is a generator, then $X\sim_R \S^{4n+3}$, and 
		\item[(ii)] If $u$ is trivial, then $X\sim_R \S^3\times \mathbb{HP}^n. $		
	\end{enumerate}	
\end{theorem}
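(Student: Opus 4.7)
My plan is to apply Proposition~2.1 to the principal $\S^3$-bundle $\S^3 \hookrightarrow X \stackrel{\pi}{\to} X/G$, taken as an oriented $\S^3$-bundle (so the integer $n$ in Proposition~2.1 equals $4$). Write $H^*(X/G;R) \cong R[a]/(a^{n+1})$ with $|a|=4$; then the Euler class has the form $u = c\,a$ for some $c \in R$, and cup product with $u$ raises degree by $4$.

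First I would read off the low- and high-degree ends of the Gysin sequence. From
\[
0 \to H^3(X/G) \to H^3(X) \stackrel{\rho}{\to} H^0(X/G) \stackrel{\cup u}{\to} H^4(X/G)
\]
together with $H^3(X/G)=0$, one gets $H^3(X) \cong \ker(\cdot c : R \to R)$, a submodule of $R$. Symmetrically,
\[
0 = H^{4n+3}(X/G) \to H^{4n+3}(X) \stackrel{\rho}{\to} H^{4n}(X/G) \stackrel{\cup u}{\to} H^{4n+4}(X/G) = 0
\]
forces $H^{4n+3}(X) \cong H^{4n}(X/G) = R$. These endpoints already dictate a dichotomy on $u$: either $\cup u \colon H^0 \to H^4$ is zero, or it is injective. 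For $R = \Z_2$ this yields the desired conclusion immediately, since $H^4(X/G;\Z_2)=\Z_2$ has only two elements. For $R = \Z$ the remaining step---ruling out $c \in \Z$ with $|c| \geq 2$---is the main technical obstacle; I would attempt to extract it by running the Leray--Serre spectral sequence of the Borel fibration $X \hookrightarrow X_G \to B_G$ (using $X_G \simeq X/G$) with $E_2^{p,q} = H^p(B_G) \otimes H^q(X) \Rightarrow H^{p+q}(X/G)$, and using that the target ring $\Z[a]/(a^{n+1})$ is torsion-free to constrain how any $\Z/c$-torsion appearing in $H^*(X)$ could propagate through the differentials.

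Granted the dichotomy, the rest is a direct degree-by-degree reading of the Gysin sequence via the short exact sequence
\[
0 \to \mathrm{coker}\bigl(\cup u \colon H^{i-4}(X/G) \to H^i(X/G)\bigr) \to H^i(X) \to \ker\bigl(\cup u \colon H^{i-3}(X/G) \to H^{i+1}(X/G)\bigr) \to 0.
\]
For $i \not\equiv 0, 3 \pmod 4$ both ends vanish and $H^i(X) = 0$. Otherwise,
\[
H^{4k}(X) \cong H^{4k}(X/G)\big/\, u \cdot H^{4k-4}(X/G), \qquad H^{4k+3}(X) \cong \ker\bigl(\cup u \colon H^{4k}(X/G) \to H^{4k+4}(X/G)\bigr).
\]

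In case (i), with $u$ a generator, each $\cup u \colon H^{4k}(X/G) \to H^{4k+4}(X/G)$ is an isomorphism for $0 \le k < n$ and zero for $k = n$, so only $H^0(X) = R$ and $H^{4n+3}(X) = R$ survive, giving $X \sim_R \S^{4n+3}$. In case (ii), with $u = 0$, every $\cup u$ vanishes, so $H^{4k}(X) \cong H^{4k+3}(X) \cong R$ for $0 \le k \le n$ and zero elsewhere, additively matching $\S^3 \times \mathbb{HP}^n$. To upgrade this to the ring-level statement in case (ii), I would use multiplicativity of $\pi^*$: the class $\pi^*(a)$ generates $H^4(X)$ and its $k$-th power generates $H^{4k}(X)$, while a lift of $1 \in H^0(X/G)$ through $\rho$ provides an independent degree-$3$ class whose square vanishes for degree reasons, yielding the product structure.
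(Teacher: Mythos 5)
Your Gysin-sequence analysis reproduces the paper's route for the additive part: the endpoint dichotomy, the degree-by-degree computation of $H^*(X)$ in cases (i) and (ii), and the identification of the Borel fibration $X\hookrightarrow X_G\to B_G$ as the tool for excluding a non-unit Euler class over $\Z$. The genuine gap is at the end of case (ii), which is exactly where the paper invests most of its effort. Knowing that $a_4^k=\pi^*(a)^k$ generates $H^{4k}(X)$ and that some $b_3\in H^3(X)$ with $\rho(b_3)=1$ exists (with $b_3^2=0$ for degree reasons) does \emph{not} yet give the ring of $\S^3\times\mathbb{HP}^n$: you must show that $a_4^kb_3$ generates $H^{4k+3}(X)$ for every $k$, i.e.\ $a_4^kb_3\neq 0$ over $\Z_2$ and $a_4^kb_3=\pm b_{4k+3}$ over $\Z$. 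Multiplicativity of $\pi^*$ says nothing about these mixed products; a priori $a_4^kb_3$ could be zero or a non-unit multiple of $b_{4k+3}$, and your phrase ``yielding the product structure'' is where this is silently assumed. The paper closes this by a careful Leray--Serre argument: if some $a_4^kb_3$ fails to be a generator, then either $d_4(1\otimes b_3)=0$, making $t^i\otimes b_3$ permanent cocycles and forcing $H^j(X_G)\cong H^j(X/G)\neq 0$ in arbitrarily large degrees, or (over $\Z$) an analysis of the coefficients $m_i$ in $d_4(1\otimes b_{4i+3})=m_i(t\otimes a_4^i)$ leads to the same contradiction. Alternatively, your sketch could be repaired more cheaply by invoking the $H^*(X/G)$-module structure of the Gysin sequence (the projection formula $\rho(\pi^*(x)\cdot y)=x\cdot\rho(y)$), which gives $\rho(a_4^kb_3)=a^k$ immediately; but you invoke neither this nor the spectral-sequence argument, so as written the ring-level conclusion is unsupported.

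A secondary, smaller issue: over $\Z$ the exclusion of an Euler class $u=m\cdot a$ with $|m|\geq 2$ is only announced as a plan. The concrete point you need (and which the paper supplies) is that in the Borel spectral sequence both $t\otimes 1\in E_2^{4,0}$ and $1\otimes a_4\in E_2^{0,4}$ are permanent cocycles, because every differential into or out of these positions involves a vanishing group ($H^1$, $H^2$, $H^3$, $H^5$ of $X$, or $H^5(B_G)$). Note that torsion surviving at $E_\infty^{0,4}$ alone is not a contradiction, since that is only a quotient of $H^4(X_G)$ and $\Z_m$ is a quotient of $\Z$; it is the simultaneous survival of the free class at $E_\infty^{4,0}$ that forces $H^4(X_G)\cong\Z\oplus\Z_m$, contradicting $H^4(X_G)\cong H^4(X/G)\cong\Z$. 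Your appeal to ``torsion-freeness of the target'' points in the right direction but does not by itself complete this step.
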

\begin{proof}
As $G$ is a compact Lie group which acts freely
on $X$, we have the Gysin sequence of the  sphere bundle $G\hookrightarrow X\stackrel{\pi}{\longrightarrow} X/G$:
\begin{align*}
\cdots\longrightarrow H^{i}(X)\stackrel{\rho}{\longrightarrow}H^{i-3}(X/G)\stackrel{\cup}{\longrightarrow}H^{i+1}(X/G) \stackrel{\pi^*}{\longrightarrow}H^{i+1}(X)\stackrel{\rho}{\longrightarrow}H^{i-2}(X/G)\longrightarrow\cdots
\end{align*}
which begins with
\begin{align*}
0\longrightarrow &H^3(X/G)\stackrel{\pi*}{\longrightarrow}H^3(X)\stackrel{\rho}{\longrightarrow}H^0(X/G)\stackrel{\cup}{\longrightarrow}H^4(X/G)\stackrel{\pi^*}{\longrightarrow}H^4(X)\longrightarrow\cdots
\end{align*}
Since $X/G\sim_R \mathbb{HP}^n$, we have $H^*(X/G)=R[a]/\langle a^{n+1}\rangle$, where $\deg a=4$.	
Note that $H^i(X)\cong H^i(X/G)$ for $i=0,1,2$.  By the exactness of the Gysin sequence,  $H^{4i+1}(X)=H^{4i+2}(X)=0$ for all $i\geq 0$ and $H^j(X)=0$ for all $j>4n+3$. There are three possibilities: If the  Euler class is (i)  generator, (ii) nontrivial but not a generator, and (iii)  trivial.\\

If the Euler class $u\in H^4(X/G)$ is a generator then   $\cup:H^{4i}(X/G)\to H^{4i+4}(X/G)$ is an isomorphism for all $0\leq i< n$ and thus,   the Euler class of the bundle $G\to X\stackrel{\pi}{\to }X/G$ is nonzero. By the exactness of the Gysin sequence $\rho:H^{4i+3}(X)\to H^{4i}(X/G)$ and $\pi^*:H^{4i+4}(X/G)\to H^{4i+4}(X)$ becomes trivial for all $0\leq i< n$. This gives that $H^{4i+3}(X)=H^{4i+4}(X)=0$ for all $0\leq i< n.$  As $H^{4n+4}(X/G)=0$, we have $H^{4n+3}(X)\cong  H^{4n}(X/G)\cong R$. 
Consequently, \begin{align*}
H^i(X)=\begin{cases}
R &\text{ if } i=0,4n+3\\
0 &\text{ otherwise. }
\end{cases}
\end{align*}
It is clear that $X\sim_R\S^{4n+3}$.\\

If $u\in H^4(X/G)$  is a nontrivial  but not a generator then
this  is possible only when $R=\Z$ and the Euler class $u\in  H^4(X/G)$ is $m.a$, where $m$ is an integer different from 0 and 1. Then, the Euler class   of the associated bundle  is $m.a$ and $\cup:H^{4i}(X/G)\to H^{4i+4}(X/G)$ maps generator $a^i$ to  $m.a^{i+1}$ for all $0\leq i<n$. By the exactness of the Gysin sequence, $H^{4i+3}(X)=0$ and $H^{4i+4}(X)\cong  H^{4i+4}(X/G)/\ker \pi^*\cong \Z_m$  for all $0\leq i< n$. As $H^{4n+4}(X/G)=0$, we have $H^{4n+3}(X)\cong H^{4n}(X/G)\cong\Z$.  Let $a_4\in H^4(X)$ and $b_{4n+3}\in H^{4n+3}(X) $ be such that $\pi^*(a)=a_4$ and $\rho(b_{4n+3})=a^n$.    Thus, we have 
\begin{align*}
H^{i}(X)=\begin{cases}
\Z &\text{ if } i=0 \text{ or }4n+3\\
\Z_m &\text{ if }0< i\equiv 0\; \text{(mod 4)} \leq  4n\\
0 &\text{ otherwise.}
\end{cases}
\end{align*}
As $G$ acts freely on $X$ and $B_G$ is simply connected, the $E_2$-term of the associated  Leray-Serre spectral sequence  for the Borel fibration $X\hookrightarrow X_G\to B_G$ is given by $E_2^{p,q}=H^p(B_G)\otimes H^q(X)$ which converges to $H^*(X_G)$ as an algebra.
Now, $H^*(B_G)=H^*(\mathbb{HP}^{\infty})=\Z[t],$ where $ \deg t=4$. Note that the only possible nontrivial differentials are $d_{4r}:E_{4r}^{*,*}\to E_{4r}^{*,*},1\leq r\leq n+1$. As $4n+4\geq 8$, $t\otimes 1$ and $1\otimes a_4$ are permanent cocycles. So, $H^4(X_G)\cong \Z\oplus \Z_m$,  a contradiction.

If the Euler class $u\in  H^4(X/G)$ is trivial then  the Euler class  of the bundle $G\to X\to X/G$ is zero and $\cup:H^{4i}(X/G)\to H^{4i+4}(X/G)$ is trivial for all $i\geq 0$. By the exactness of the Gysin sequence, $\rho:H^{4i+3}(X)\to H^{4i}(X/G)$ and $\pi^*:H^{4i}(X/G)\to H^{4i}(X)$ becomes isomorphism for all $0\leq i\leq n$. Let $a_4\in H^4(X)$ and $b_{4i+3}\in H^{4i+3}(X) $ be such that $\pi^*(a)=a_4$ and $\rho(b_{4i+3})=a^i$ for all $0\leq i\leq n$. This implies that $H^{4i+3}(X)\cong R$ with basis $\{b_{4i+3}\}$ and   $H^{4i}(X)\cong R$ with basis $\{a_4^{i}\}$ for all $0\leq i\leq n$. Thus, we have 
\begin{align*}
H^{i}(X)=\begin{cases}
R &\text{ if }0\leq i\equiv 0 \text{ or }3 \;(\text{mod } 4)\leq 4n+3\\
0 &\text{ otherwise.}
\end{cases}
\end{align*}
Note that $b_ib_j=0$ for all $i$ and $j$ and $a_4^{n+1}=0$. Next, we observe that $a_4^ib_3=b_{4i+3} $ for all $1\leq i\leq n$. In the associated  Leray-Serre spectral sequence,  the only possible nontrivial differentials are $d_{4r}:E_{4r}^{*,*}\to E_{4r}^{*,*}$, for $0\leq r\leq n+1$. So, the first nonzero possible differential is $d_4$. Clearly, $d_4(1\otimes a_4^i)=0$ for all $i\geq 0$. Now, we consider two subcases for coefficient groups $R=\Z_2$ or $R=\Z$:

Let $R=\Z_2$ and  $a_4^kb_3= 0$ for some $1\leq k\leq n$. 
If $d_4(1\otimes b_3)=t\otimes 1$, then $t\otimes a_4^k=d_4((1\otimes  a_4^k)(1\otimes  b_3))=0$ which is not possible. Therefore, $d_4(1\otimes b_3)=0$. As $d_{4r}:E_{4r}^{4i-4r,4r+2}\to E_{4r}^{4i,3}$ is trivial, $t^i\otimes b_3$  are permanent cocycles for all $i\geq 0$,  a contradiction to the fact that $H^j(X/G)=0$ for all $j>4n$. Therefore, $a^i_4b_3\neq 0$ for all $1\leq i\leq n$. This implies that $b_{4i+3}=a^i_4b_3$ for all $1\leq i\leq n$.
Thus, the  cohomology ring of $X$ is  $\Z_2[a_4,b_3]/\langle a_4^{n+1}, b_3^2 \rangle, \deg a_4=4, \deg b_3=3$. It is clear that $X\sim_{\Z_2}\mathbb{S}^3\times \mathbb{HP}^n$. This realizes case(ii)  of the theorem.

Now, let $R=\Z$ and $a_4^jb_3\not= \pm b_{4j+3}$ for some $1\leq j\leq n$.
Let $i_{0}\in \Z$ be the largest integer such that $a^{i_0}_4b_3\not=\pm b_{4i_{i_0}+3}$. If $d_4(1\otimes b_3)=0$, then  $\{t^i\otimes b_3\}$ are permanent cocycles for all $i\geq 0$, which is not possible as 
in subcase(i).  So, let $d_4(1\otimes b_{4i+3})=m_{i}(t\otimes a_4^i)$, where $m_i\in \Z$ and $m_0\not=0$. Then,   $H^4(X_G)\cong \Z\oplus\Z_{m_0}$. This gives that $m_0=\pm 1$. Clearly, $d_4:E_4^{0,4j+3}\to E_4^{4,4j}$ is an isomorphism for  $i_0+1\leq j\leq n$. So, we have $E_5^{i,4j}=E_5^{i,4j+3}=0$ for all $i\geq 0$, $j=0$ and $i_0+1\leq j\leq n$. Note that $E_5^{4i,4j}=\Z_{m_j}$, where $1\leq j\leq i_0$, and $E_5^{4i,4j+3}$ is $\Z$ if $m_j=0$, and trivial, otherwise. If $d_4:E_4^{0,4i_0+3}\to  E_4^{4,4i_0}$ is trivial, then $\{t^i\otimes b_{4i_0+3}\}_{i\geq 0}$ are permanent cocycles, a contradiction. So, let  $d_4:E_4^{0,4i_0+3}\to  E_4^{4,4i_0}$ is nontrivial.  Now, $d_4(1\otimes(a_4^{i_0}b_3\pm b_{4{i_0}+3}))=(m_0\pm m_{i_0})(t\otimes a_4^{i_0})$. Consequently, $m_{i_0}\not =\pm 1$. Thus,  $H^j(X_G)$ is nonzero for infinitely many values of $j$,  a contradiction. Therefore, $a_4^jb_3$ is $b_{4j+3}$ or $-b_{4j+3}$ for all $j$. Hence, $X\sim_{\Z}\mathbb{S}^3\times\mathbb{HP}^n$.  
\end{proof}

Now, we compute the orbit space of free actions of $G=\mathbb{S}^3$ on a paracompact space with integral cohomology $n$-sphere: 
\begin{theorem}\label{Theorem 2}
	Let $G=\S^3$ acts freely on a paracompact space $X$ with $X\sim_\Z\S^{n}$. Then, $n=4k+3$, for some $k\geq 0$ and $X/G\sim_\Z \mathbb{HP}^k$.
\end{theorem}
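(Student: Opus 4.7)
The plan is to run the Leray--Serre spectral sequence of the Borel fibration $X \hookrightarrow X_G \to B_G$ with $B_G = \mathbb{HP}^\infty$, exploiting the homotopy equivalence $X_G \simeq X/G$ that holds for free actions. Since $H^*(B_G;\Z) = \Z[t]$ with $|t|=4$ and, by hypothesis, $H^*(X;\Z)$ is $\Z$ in bidegrees $(0,0)$ and $(0,n)$ and zero elsewhere, the $E_2$-page $E_2^{p,q} = H^p(B_G;\Z) \otimes H^q(X;\Z)$ has exactly two non-zero rows $q=0$ and $q=n$, each supported on the columns $p$ divisible by $4$. The convergence $E_\infty \Rightarrow H^*(X/G;\Z)$ is subject to Proposition \ref{H^j(X/G)=0}, which forces $H^j(X/G;\Z)=0$ for all $j>n$.

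With only two non-zero rows, the sole potentially non-trivial differential is the transgression $d_{n+1} \colon E_{n+1}^{p,n} \to E_{n+1}^{p+n+1,0}$. Its target $H^{p+n+1}(B_G;\Z)$ can be non-zero only when $n+1 \equiv 0 \pmod 4$, because $p$ itself is already a multiple of $4$. If $n \not\equiv 3 \pmod 4$, then $d_{n+1} = 0$, the sequence collapses at $E_2$, and $H^*(X/G;\Z)$ is non-zero in arbitrarily large degrees, contradicting Proposition \ref{H^j(X/G)=0}. This establishes the first conclusion $n = 4k+3$ for some $k\geq 0$.

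Next, fixing $n = 4k+3$ and a generator $x \in H^n(X;\Z) \cong \Z$, I would set $d_{n+1}(1 \otimes x) = m\, t^{k+1}$ for some $m \in \Z$, then apply the Leibniz rule together with $d_{n+1}(t^i \otimes 1) = 0$ (the target lies in a negative row) to obtain $d_{n+1}(t^i \otimes x) = m\, t^{i+k+1}$ for every $i \geq 0$. The surviving $E_\infty$-terms on the bottom row are $\Z$ in degrees $0,4,\ldots,4k$ and $\Z/m$ in all higher multiples of $4$, while those on the top row are the kernels of multiplication by $m$. A second appeal to Proposition \ref{H^j(X/G)=0} forces these groups to vanish in degrees $>n$, which pins down $|m|=1$. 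This simultaneously kills the top row at infinity and truncates the bottom row to $\Z[t]/\langle t^{k+1}\rangle$. Because each total degree now contains at most one non-zero $E_\infty$-entry there are no extension issues, the edge homomorphism $H^*(X/G;\Z) \to E_\infty^{*,0}$ is a ring isomorphism, and consequently $X/G \sim_\Z \mathbb{HP}^k$.

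The main obstacle I anticipate is the second use of Proposition \ref{H^j(X/G)=0}. One has to realise that the finiteness of the cohomological dimension of $X/G$ must be invoked \emph{twice}: once to rule out all $n \not\equiv 3 \pmod 4$, and once to force the transgression constant $m$ to be a unit. Without this second application the spectral-sequence data alone would leave both $E_\infty^{*,n}$ and the torsion on the bottom row undetermined, and neither the value of $n \pmod 4$ nor the ring structure of $H^*(X/G)$ would be fully pinned down.
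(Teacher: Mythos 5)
Your proof is correct, but it runs along a genuinely different fibration from the one the paper uses. The paper works entirely with the Gysin sequence of the $3$-sphere bundle $\S^3\hookrightarrow X\stackrel{\pi}{\to}X/G$: since $H^i(X)=0$ for $0<i<n$, cupping with the Euler class $a\in H^4(X/G)$ is an isomorphism $H^i(X/G)\to H^{i+4}(X/G)$ for $0\leq i\leq n-4$, which gives the $4$-periodicity of $H^*(X/G)$ below degree $n$; Proposition \ref{H^j(X/G)=0} is then invoked exactly as in your argument, once to force $n\equiv 3\;(\mathrm{mod}\ 4)$ and once to truncate the ring at $a^{k+1}$. You instead compute $H^*(X_G)\cong H^*(X/G)$ from the two-row Leray--Serre spectral sequence of the Borel fibration over $B_{\S^3}=\mathbb{HP}^\infty$ and pin the transgression $d_{n+1}(1\otimes x)=m\,t^{k+1}$ down to $m=\pm1$. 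The two computations are dual to one another and consume the same external input (the vanishing of $H^j(X/G)$ for $j>n$, used twice in each). Your route has the small advantage that the multiplicative structure of $H^*(X/G)$ comes for free, the surviving bottom row being a quotient of $\Z[t]$; the paper's route is more elementary, avoiding spectral sequences altogether, and exhibits the degree-$4$ generator concretely as the Euler class of the bundle $X\to X/G$. One cosmetic correction: the edge homomorphism runs $H^*(B_G)\to H^*(X_G)$ with image $E_\infty^{*,0}$, not $H^*(X/G)\to E_\infty^{*,0}$; since each total degree carries a single nonzero $E_\infty$-entry, your conclusion is unaffected.
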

\begin{proof}
	  By the Gysin sequence sequence of the  3-sphere bundle, we get $H^0(X/G)\cong\Z$ and $H^i(X/G)=0$, for all $1\leq i\leq 3$  when $n\not=1,2 $ or 3.  Then, for $0\leq i\leq n-4$, $\cup:H^i(X/G)\to H^{i+4}(X/G)$ is an isomorphism. This gives that $H^i(X/G)=0$ for $0<i\equiv j\;(\text{mod } 4)<n,$ where $ 1\leq j\leq 3$ and $H^i(X/G)\cong\Z$  for $0\leq i\equiv 0(\text{mod }4)<n$ with basis $\{a^{\frac{i}{4}}\}$, where  $a\in H^4(X/G)$ denotes its generator. Suppose $n\equiv j\;(\text{mod }4)$, for some $0\leq j\leq 2$ then $H^{n-3}(X/G)=0$.  If ($n=1$ or 2) or $(0\leq j\leq 2)$, then by the exactness  of the Gysin sequence, $H^n(X/G)\not=0$,  which contradicts  Proposition \ref{H^j(X/G)=0}. Therefore, $n\equiv 3\; ( \text{mod } 4)$. Let $n=4k+3$ for some $k\geq 0$. For $n=3$, the result is trivially true. So let $n>3$. 
  Again, by Proposition \ref{H^j(X/G)=0},  $H^j(X/G)=0$ for all $j>n$, and hence  $a^{k+1}=0$. This implies that $\rho:H^n(X)\to H^{n-3}(X/G)$ is an isomorphism. Consequently,  $H^n(X/G)=0$.  Thus, we have, $H^*(X/G)=\Z[a]/\langle a^{k+1}\rangle, \deg a=4.$\qedhere
\end{proof}

	In 1963,  Su \cite{Su1963} has shown that if $G=\mathbb{S}^1$ acts freely on a space $X$ with orbit space $X/G\sim_\Z \mathbb{CP}^n$ and $\pi^*:H^2(X/G)\to H^2(X)$ is trivial,  then $X\sim_{\Z}\mathbb{S}^{2n+1}$, where $\pi:X\to X/G$ is the orbit map. In the next theorem, we discuss the case when $\pi^*$ is nontrivial.  
	\begin{theorem}\label{Theorem 3}
		Let $G=\mathbb{S}^1$ acts freely on a finitistic space $X$ with $X/G\sim_{\Z}\mathbb{CP}^n$, and $u\in H^2(X/G)$ be the Euler class of the bundle $G\to X\stackrel{\pi}{\to}X/G$. If the induced map $\pi^*:H^2(X/G)\to H^2(X)$ is nontrivial, then $u$ is trivial and $X\sim_{\Z} \mathbb{S}^1\times \mathbb{CP}^n$. 
	\end{theorem}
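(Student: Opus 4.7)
The plan is to run the same three-case Gysin analysis as in the proof of Theorem \ref{Theorem 1}, now for the principal circle bundle $G\hookrightarrow X\stackrel{\pi}{\to}X/G$. Since $H^*(X/G;\mathbb{Z})=\mathbb{Z}[a]/\langle a^{n+1}\rangle$ with $\deg a=2$, the Euler class takes the form $u=m\cdot a$ for some $m\in\mathbb{Z}$, and the cup product $\cup u\colon H^{2i}(X/G)\to H^{2i+2}(X/G)$ sends $a^i$ to $m\,a^{i+1}$. The proof then reduces to a case analysis on $m$, with $H^*(X)$ read off from the Gysin sequence in each case.

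First I would handle $|m|=1$: here $\cup u$ is an isomorphism in every positive degree, so by exactness $\pi^*$ vanishes on $H^2(X/G)$, contradicting the hypothesis. For $|m|\ge 2$ the Gysin sequence yields
\[
H^i(X)=\begin{cases} \mathbb{Z} & i=0\text{ or }i=2n+1,\\ \mathbb{Z}/m & 0<i=2j\le 2n,\\ 0 & \text{otherwise.}\end{cases}
\]
To rule this out I would invoke the Leray--Serre spectral sequence of the Borel fibration $X\hookrightarrow X_G\to B_G=\mathbb{CP}^\infty$, with $H^*(B_G;\mathbb{Z})=\mathbb{Z}[t]$, $\deg t=2$. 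Since $H^{\mathrm{odd}}(B_G)=0$ and $H^1(X)=0$ in this case, a direct bidegree check shows that no differential can hit $t\otimes 1\in E_2^{2,0}$ or leave $1\otimes \pi^*(a)\in E_2^{0,2}$, while $E_2^{1,1}=H^1(B_G)\otimes H^1(X)=0$; these force the associated graded of $H^2(X_G)$ to be $\mathbb{Z}\oplus\mathbb{Z}/m$, contradicting the ring isomorphism $H^*(X_G)\cong H^*(X/G)=\mathbb{Z}[a]/\langle a^{n+1}\rangle$ supplied by the homotopy equivalence $h\colon X_G\to X/G$, exactly as in Theorem \ref{Theorem 1}. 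Hence $m=0$, i.e.\ $u$ is trivial.

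With $u=0$, the Gysin sequence splits into short exact pieces making $\pi^*$ an isomorphism on even degrees and $\rho$ an isomorphism on odd degrees; thus $H^{2i}(X)\cong H^{2i+1}(X)\cong\mathbb{Z}$ for $0\le i\le n$ and $H^j(X)=0$ otherwise. Setting $\alpha=\pi^*(a)\in H^2(X)$ and choosing $\beta\in H^1(X)$ with $\rho(\beta)=1$, graded commutativity gives $2\beta^2=0$, so $\beta^2=0$ because $H^2(X)=\mathbb{Z}$ is torsion-free. The $H^*(X/G)$-module identity $\rho(\pi^*(x)\cup y)=x\cup\rho(y)$ for the Gysin map yields $\rho(\alpha^i\beta)=a^i$, so $\alpha^i\beta$ generates $H^{2i+1}(X)$ for $0\le i\le n$, while $\alpha^{n+1}=\pi^*(a^{n+1})=0$. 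This identifies the cohomology ring of $X$ as $\mathbb{Z}[\alpha,\beta]/\langle\alpha^{n+1},\beta^2\rangle\cong H^*(\mathbb{S}^1\times\mathbb{CP}^n;\mathbb{Z})$.

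The step most likely to need care is the spectral sequence argument ruling out $|m|\ge 2$: although the candidate differentials are easy to enumerate from bidegrees, one has to verify at every page that no differential crosses the critical positions $E_r^{2,0}$ or $E_r^{0,2}$, a verification that rests on the combined vanishing of $H^{\mathrm{odd}}(B_G)$ and of $H^1(X)$ in that case.
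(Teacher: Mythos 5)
Your handling of the case $u=0$ is correct, and in fact cleaner than the paper's own treatment: invoking the $H^*(X/G)$-module property $\rho(\pi^*(x)\cup y)=x\cup\rho(y)$ to get $\rho(\alpha^i\beta)=a^i$ pins down the ring structure directly, where the paper instead runs a fairly delicate spectral-sequence argument to show $a_2^jb_1=\pm b_{2j+1}$; and $2\beta^2=0$ in the torsion-free group $H^2(X)$ disposes of $\beta^2$. The case $|m|=1$ is also fine.

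The gap is in the step ruling out $|m|\ge 2$, and it is fatal. What the spectral sequence gives you in total degree $2$ is a filtration of $H^2(X_G)$ with $E_\infty^{2,0}\cong\Z$ as the bottom subgroup and $E_\infty^{0,2}\cong\Z_m$ as the top quotient, i.e.\ a short exact sequence $0\to\Z\to H^2(X_G)\to\Z_m\to 0$. This does \emph{not} force $H^2(X_G)\cong\Z\oplus\Z_m$: the extension $0\to\Z\xrightarrow{\times m}\Z\to\Z_m\to 0$ has the same associated graded, so your computation is perfectly consistent with $H^2(X_G)\cong\Z$, with $t$ mapping to $m$ times a generator. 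No contradiction is obtained. Moreover the gap cannot be repaired, because the case you are trying to exclude actually occurs: for the residual free action of $\S^1\cong\S^1/\Z_m$ on the lens space $X=L_m^{2n+1}$ one has $X/G=\CP^n$, Euler class $u=m\,a$, and $\pi^*:H^2(\CP^n;\Z)\to H^2(L_m^{2n+1};\Z)\cong\Z_m$ surjective, hence nontrivial for $m\ge 2$, while $u\ne 0$ and $X\not\sim_{\Z}\S^1\times\CP^n$. (This is precisely the example the paper itself records after its last theorem.) So the statement is false as written, and the paper's own argument for this case --- that the classes $t^i\otimes a_2^j$ are ``permanent cocycles, a contradiction'' --- fails for the same underlying reason: those classes are indeed permanent cocycles, but they are killed as coboundaries by differentials emanating from the $(2n+1)$-st row, exactly as happens for $L_m^{2n+1}$. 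Any correct version of the theorem must either strengthen the hypothesis (e.g.\ require $\pi^*$ injective on $H^2$) or admit $L_m^{2n+1}$-type conclusions, as the paper's mod $p$ analogue does.
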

		\begin{proof}
	 As $X/G\sim_\Z \mathbb{CP}^n$, $H^*(X/G)=\Z[a]/\langle a^{\frac{n+1}{2}}\rangle $, where $\deg a=2$. As $\pi_1(B_G)=1$,  $E_2$-term of the Leray-Serre spectral sequence is $E_2^{p,q}=H^p(B_G)\otimes H^q(X)$ for the Borel fibration $X\hookrightarrow X_G\to B_G$. Note that the possible nontrivial differentials are $d_2,d_4,\cdots d_{2n+2}$. Suppose $\pi^*:H^2(X/G)\to H^2(X)$ is nontrivial. Then the Euler class $u\in H^2(X/G)$ is not a generator. So, first suppose that the Euler class of the principal bundle $X\stackrel{\pi}{\to}X/G$ is $m.a$, where $m\not =0$ in $\Z$. As $\pi^*:H^2(X/G)\to H^2(X)$ is nontrivial, $m\not =\pm 1$. Then by the exactness of the Gysin sequence $H^i(X)\cong \Z$ for $i=0,2n+1$; $H^i(X)\cong \Z_m$ with basis $\{a_2^{\frac{i}{2}}\}$ for $i=0,2,4,\cdots,2n $; and trivial otherwise. It gives that $t^i\otimes a_2^j$ are permanent cocycles for all $i,j\geq 0$, a contradiction. Next, suppose that the Euler class $u$ of the principal bundle is zero.   Consequently, we have 
	$$H^j(X)=\begin{cases}
	\Z &\text{ if } 0\leq j\leq 2n+1\\
	0 &\text{ otherwise. }
	\end{cases}$$
	 Let $a_2\in H^2(X)$ and $b_{2i+1}\in H^{2i+1}(X) $ be such that $\pi^*(a)=a_2$ and $\rho(b_{2i+1})=a^i$ for all $0\leq i\leq n$. This implies that $H^{2i+1}(X)\cong \Z$ with basis $\{b_{2i+1}\}$  and $H^{2i}(X)\cong \Z$ with basis $\{a_2^{i}\}$ for all $0\leq i\leq n$. Let if possible $a_2^jb_1\not= \pm b_{2j+1}$ for some $1\leq j\leq n$ and suppose $i_0$ be such an largest integer. As $H^1(X_G)=0$, $d_2(1\otimes b_1)\not=0$. So, let $d_2(1\otimes b_{2i+1})=m_{i}(t\otimes a_2^i)$, where $m_i\in \Z$ and $m_0\not=0$. 
	 Note that $E_3^{2i,2j}=\Z_{m_j}$ and $E_3^{2i,2j+1}$ is $\Z$ if $m_j=0$ and trivial otherwise for all $i\geq 0$ and $0\leq j\leq n$. Since $H^2(X_G)\cong \Z$, we have $d_2:E_2^{0,1}\to E_2^{2,0}$ is an isomorphism. Therefore, $E_3^{i,2j}=E_3^{i,2j+1}=0$ for all $i\geq 0$ and $i_0+1\leq j\leq n$.  If $d_2:E_2^{0,2i_0+1}\to  E_2^{2,2i_0}$ is trivial, then $\{t^i\otimes b_{2i_0+1}\}_{i\geq 0}$ are permanent cocycles, a contradiction. So, let  $d_2:E_2^{0,2i_0+1}\to  E_2^{2,2i_0}$ is nontrivial.
	 As $d_{2}(1\otimes a_2)=0$, we get $m_{i_0}\not=m_0$, and hence  $t^i\otimes a_2^{i_0}$ are permanent cocycles for all $i\geq 0$, a contradiction. Thus, $$H^*(X)=\Z[a_2,b_1]/\langle a_2^n,b_1^2\rangle,$$ where $\deg b_1=1 \text{ and }\deg a_2=2$. Hence, our claim.\end{proof}

Now, we prove  similar results with coefficients in $\mathbb{Z}_p$, $p$ a prime.

\begin{theorem}
	Let $G=\mathbb{S}^1$ acts freely on a finitistic space $X$ with the orbit space $X/G\sim_{\Z_p}\mathbb{CP}^n$, $p$ a prime. Let $\pi^*:H^2(X/G)\to H^2(X)$ be the map induced by the orbit map $\pi:X\to X/G$.
	\begin{enumerate}
		\item If  $\pi^*:H^2(X/G)\to H^2(X)$ is trivial, then $X\sim_{\Z_p}\mathbb{S}^{2n+1}$.
		\item If $\pi^*:H^2(X/G)\to H^2(X)$ is nontrivial, then either $X\sim_{\Z_p} \mathbb{S}^1\times \mathbb{CP}^n$ or $L_p^{2n+1}$.
	\end{enumerate}
	\begin{proof}
The Euler class of the principal bundle $X\to X/G$ is either trivial or a generator of $H^4(X/G;\mathbb{Z}_p)$. If the Euler class of the associated bundle is trivial, then $X\sim_{\Z_p} \mathbb{S}^{2n+1}.$ So, let the Euler class be a generator of $H^4(X;\mathbb{Z}_p)$. It is easy to see that
$$H^*(X;\mathbb{Z}_p)\cong \mathbb{Z}_p[b_1,b_2,\cdots,b_{2n+1},a_2]/\langle a_2^{n+1} \rangle, \deg a_2=2,\deg b_i=i.$$ In the Leray-Serre spectral sequence, we must have  $d_2(1\otimes b_1)\not= 0$ for suitable choice of generator $b_1$ and  $d_2(1\otimes a_2^i)=0$ for all $0\leq i\leq n$. This implies that $b_{2i+1}=a_2^ib_1$ for all $0\leq i\leq n$. If $b_1^2=0$, then  $X\sim_{\Z_p} \mathbb{S}^1\times \mathbb{CP}^n$. If $b_1^2\not= 0$ and $p=2$, then $a_2=b_1^2$. This gives that $X\sim_{\Z_2} \mathbb{RP}^{2n+1}.$  If $b^2_1\not= 0$ and $p$ is an odd prime, then $\beta (b_1)=a_2$, where $\beta:H^1(X; \mathbb{Z}_p)\to  H^2(X; \mathbb{Z}_p)$ is the Bockstein homomorphism associated to the coefficient sequence $0\to \mathbb{Z}_p \to \mathbb{Z}_{p^2}\to \mathbb{Z}_p\to 0$, then $X\sim_{\Z_p} L_p^{2n+1}$. \end{proof}
\end{theorem}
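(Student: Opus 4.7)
The plan is to adapt the proof of Theorem~\ref{Theorem 3} to $\mathbb{Z}_p$ coefficients, exploiting two simplifications afforded by working over a field and confronting one new obstruction. Since $\mathbb{Z}_p$ is a field and $H^2(X/G;\mathbb{Z}_p)\cong \mathbb{Z}_p$, the Euler class $u$ is either $0$ or a generator, so no ``nontrivial but not a generator'' case arises as it did in Theorem~\ref{Theorem 1}. Moreover, exactness of the Gysin sequence at the segment $H^0(X/G)\xrightarrow{\cup u}H^2(X/G)\xrightarrow{\pi^*}H^2(X)$ immediately yields the equivalences $\pi^*$ trivial $\Leftrightarrow u$ a generator, and $\pi^*$ nontrivial $\Leftrightarrow u=0$, splitting the theorem into two mutually exclusive cases.

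For part (1), $u$ being a generator makes $\cup u\colon H^{2i}(X/G;\mathbb{Z}_p)\to H^{2i+2}(X/G;\mathbb{Z}_p)$ an isomorphism for $0\le i<n$. A routine traversal of the Gysin sequence (parallel to the first case of Theorem~\ref{Theorem 1}, but with an $\mathbb{S}^1$ fibre) forces $H^j(X;\mathbb{Z}_p)=0$ for $1\le j\le 2n$, while $\rho$ identifies $H^{2n+1}(X;\mathbb{Z}_p)$ with $H^{2n}(X/G;\mathbb{Z}_p)\cong\mathbb{Z}_p$; degrees $\ge 2n+2$ vanish automatically because the corresponding groups of $X/G$ do. Thus $X\sim_{\mathbb{Z}_p}\mathbb{S}^{2n+1}$.

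For part (2), $u=0$ makes the Gysin sequence split into short exact sequences $0\to H^k(X/G)\xrightarrow{\pi^*}H^k(X)\xrightarrow{\rho}H^{k-1}(X/G)\to 0$, yielding $H^k(X;\mathbb{Z}_p)\cong\mathbb{Z}_p$ for $0\le k\le 2n+1$ and zero otherwise. Setting $a_2=\pi^*(a)$ and picking $b_1\in H^1(X)$ with $\rho(b_1)=1$, the $H^*(X/G)$-module structure of $\rho$ gives $\rho(a_2^i b_1)=a^i$, so $a_2^i b_1$ is a generator of $H^{2i+1}(X;\mathbb{Z}_p)$ for each $0\le i\le n$. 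The cohomology ring is therefore determined up to the single undetermined class $b_1^2\in H^2(X;\mathbb{Z}_p)=\mathbb{Z}_p\,a_2$.

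The remaining step, and the main obstacle, is to match this data to one of the two target cohomology rings. When $p=2$, the dichotomy $b_1^2=0$ versus $b_1^2=a_2$ realises precisely the cohomology rings of $\mathbb{S}^1\times\mathbb{CP}^n$ and $\mathbb{RP}^{2n+1}=L_2^{2n+1}$, respectively. When $p$ is odd, graded commutativity forces $b_1^2=0$, so the ring structure alone coincides with that of $\mathbb{S}^1\times\mathbb{CP}^n$ in both target cases. To separate them I must pass to the mod-$p$ Bockstein $\beta\colon H^1(X;\mathbb{Z}_p)\to H^2(X;\mathbb{Z}_p)$ associated with $0\to\mathbb{Z}_p\to\mathbb{Z}_{p^2}\to\mathbb{Z}_p\to 0$: if $\beta(b_1)=0$ then $X\sim_{\mathbb{Z}_p}\mathbb{S}^1\times\mathbb{CP}^n$, while if $\beta(b_1)$ is a nonzero multiple of $a_2$ then (after rescaling $b_1$) the relation $\beta(b_1)=a_2$ identifies $X\sim_{\mathbb{Z}_p}L_p^{2n+1}$. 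This Bockstein analysis is the one genuinely new ingredient beyond the integral Theorem~\ref{Theorem 3}, and it is where care is needed since the ring isomorphism type is insensitive to the distinction.
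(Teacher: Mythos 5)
Your proposal is correct, and its skeleton matches the paper's: use the Gysin sequence of $\mathbb{S}^1\hookrightarrow X\to X/G$ to split into the two cases ($u$ a generator versus $u=0$, which over the field $\Z_p$ are the only options), compute $H^*(X;\Z_p)$ in each case, and finish with the $b_1^2$ dichotomy together with the Bockstein discussion for odd $p$. The genuine difference is in how the ring structure is pinned down in the second case: the paper obtains $b_{2i+1}=a_2^ib_1$ by running the Leray--Serre spectral sequence of the Borel fibration $X\hookrightarrow X_G\to B_G$ (arguing $d_2(1\otimes b_1)\neq 0$ while $d_2(1\otimes a_2^i)=0$), the same device it uses in Theorems \ref{Theorem 1} and \ref{Theorem 3}, whereas you invoke the $H^*(X/G)$-module property of the Gysin maps, $\rho(\pi^*(x)\cup y)=x\cup\rho(y)$, so that $\rho(a_2^ib_1)=a^i$ and injectivity of $\rho$ in odd degrees settles the multiplication at once; your route is shorter and avoids the Borel construction entirely, at the cost of needing that (standard) module property, which you should state or cite, while the paper's route is uniform with its other arguments. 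Note also that your bookkeeping of the dichotomy --- $\pi^*$ trivial exactly when $u$ is a generator (sphere case), $\pi^*$ nontrivial exactly when $u=0$ (product or lens case) --- is the correct one, consistent with Theorem \ref{Theorem 3} and with the example $L_p^{2n+1}\to \mathbb{CP}^n$, whose mod $p$ Euler class is $p\cdot a\equiv 0$; the paper's opening sentences state this correspondence the other way around (and write $H^4$ where $H^2$ is meant), evidently a slip, since the ring it then computes belongs to the zero-Euler-class case. Your closing remark is also accurate: for odd $p$ the two target rings are isomorphic, so the Bockstein is not needed for the stated disjunction, only to identify which model occurs.
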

The next example realises the  above theorem.
\begin{example}
	Recall that the map $(\lambda,(z_0,z_1,\cdots,z_n))\to (\lambda z_0,\lambda z_1,\cdots,\lambda z_n)$, where $\lambda \in \mathbb{S}^1$ and $z_i\in \mathbb{C}$, $0\leq i\leq n$, defines a standard free action of $G=\mathbb{S}^1$ on $\mathbb{S}^{2n+1}$. The orbit space $X/G$ under this action is $\mathbb{CP}^n$. For $p$ a prime, $H=\langle e^{2\pi i/p}\rangle$ induces a free action on $\mathbb{S}^{2n+1}$ with the orbit space $\mathbb{S}^{2n+1}/H=L_p^{2n+1}$.  Consequently,  $\mathbb{S}^1=G/H$ acts freely on $L_p^{2n+1}$ with the orbit space $\mathbb{CP}^n$. Recall that for $p=2$, $L_p^{2n+1}=RP^{2n+1}$.
\end{example}

\bibliographystyle{plain}

\end{document}